\theoremstyle{plain}
\newtheorem{problem}[theorem]{Problem}
\newtheorem{example}[theorem]{Example}
\begin{document}

\title{Non-conservative Noether's theorem for fractional action-like
variational problems with intrinsic and observer times\thanks{The original publication is available at http://www.ceser.res.in/ijees.html,
Int. J. Ecol. Econ. Stat.,
Vol.~9, Nr~F07, 2007, pp.~74--82.}}

\author{\textbf{Gast\~{a}o S. F. Frederico$^1$ and Delfim F. M. Torres$^2$}}

\date{$^1$Department of Science and Technology\\
University of Cape Verde\\
Praia, Santiago -- Cape Verde\\
\url{gfrederico@mat.ua.pt}\\ [0.3cm]
$^2$Department of Mathematics\\
University of Aveiro\\
3810-193 Aveiro, Portugal\\
\url{delfim@ua.pt}}

\maketitle


\begin{abstract}
\noindent \emph{We extend Noether's symmetry theorem to fractional
action-like variational problems with higher-order derivatives.}

\medskip

\noindent\textbf{Keywords:} fractional action-like variational
approach (FALVA), symmetry, conservation laws, Noether's theorem,
higher-order derivatives.

\medskip

\noindent\textbf{2000 Mathematics Subject Classification:} 49K05,
49S05, 70H33.

\end{abstract}


\section{Introduction}

Symmetries play an important role both in physics and mathematics.
They are described by transformations leaving structural relations
unchangeable. Their importance range from fundamental and
theoretical aspects to concrete applications, having profound
implications in the dynamical behavior of the systems and in their
basic qualitative properties. Knowledge of symmetries result on a
deep insight about the inner structure of a system, and permits to
apply the conservation laws to the investigation of the objects,
\textrm{i.e.} to link the invariance principles with the
conservation laws. This interrelation includes three classes of
the most fundamental principles of physics: symmetry,
conservation, and extremality.

When a closed system is characterized by a quantity which remains
unchangeable in the course of time, no matter what kind of
processes take place in the system, such quantity is said to be a
conservation law. Some fundamental conservation laws include the
conservation of energy, impulse, momentum impulse, motion of the
centre of gravity, electrical charge, and others. All physical
laws are described in terms of differential equations (equations
of motion). The conservation laws represent the first integrals of
the equations of motion and are important for three reasons.
Firstly, the task of solving the equations of motion explicitly is
not always possible and knowledge of the first integrals may
considerably simplify that task. Secondly, often there is no more
necessity to solve the equation of motion, as the useful
information is contained in the conservation laws. Thirdly, the
conservation laws have a deep physical meaning and can be measured
directly.

All basic differential equations of physics (\textrm{i.e.} the
equations of motion of physical systems) have a variational
structure. In other words, the equations of motion of a physical
system are the Euler-Lagrange equations of a certain variational
problem. It turns out that the conservation laws are the result of
the invariance of the action with respect to a continuous group of
transformations, given by some symmetry principle. The more
general expression of the interrelation symmetry/variational
structure/conservation, is given by Noether's theorem. Noether's
theorem asserts that the conservation laws for a system of
differential equations which correspond to the Euler-Lagrange
equations of a certain variational problem, come from the
invariance of the variational functional with respect to a
one-parameter continuous group of transformations. The group of
symmetry transformations requested by Noether's theorem depend,
of course, on the physical properties of the system.
We refer the interested reader to \cite{alik}.

Conservative physical systems imply frictionless motion and are a
simplification of the real dynamical world. Almost all systems
contain internal damping and are subject to external forces. For
non-conservative dynamical systems, \textrm{i.e.} in the presence
of non-conservative forces (forces that do not store energy and
which are not equivalent to the gradient of a potential), the
conservations law are broken so that the standard Lagrangian or
Hamiltonian formalism is no longer valid for describing the
behavior of the system. Methodologically, Newtonian dissipative
dynamical systems are a complement to conservative systems,
because not only energy, but also other physical quantities as
linear or angular momentums, are not conserved. In this case the
classical Noether's theorem ceases to be valid. However, it is
still possible to obtain a Noether-type theorem which covers both
conservative (closed system) and nonconservative cases
\cite{CD:Djukic:1980,GastaoIJTS}. Roughly speaking, one can prove
that Noether's conservation laws are still valid if a new term,
involving the nonconservative forces, is added to the standard
constants of motion.

In order to better model non-conservative dynamical systems,
a novel approach entitled Fractional Action-Like
Variational Approach (FALVA) has been recently introduced
\cite{CD:El-Na:2005,El-Nabulsi2005a}. This
approach is based on the concept of fractional integration.
Fractional theory plays an important role in the understanding of
both conservative and non-conservative behaviors of complex
dynamical systems, and has important physical applications in
various fields of science, \textrm{e.g.} physics, material sciences,
chemistry, biology, scaling phenomena, etc.
In \cite{CD:El-Na:2005,El-Nabulsi2005a} Riemann-Liouville
fractional integral functionals, depending on a parameter $\alpha$
but not on fractional-order derivatives of order $\alpha$, are
introduced and respective fractional Euler-Lagrange type equations
obtained. In \cite{CD:Jumarie:2007}, Jumarie uses the variational
calculus of fractional order to derive an Hamilton-Jacobi equation
and a Lagrangian variational approach to the optimal control of
one-dimensional fractional dynamics with fractional cost functional.
In this paper we extend the results of \cite{CD:Gastao:2006}
to more general FALVA problems with higher-order derivatives.


\section{Preliminaries}

We begin by collecting the necessary results from
\cite{CD:El-Na:2005,CD:GasDel:2007}. In 2005 El-Nabulsi
(\textrm{cf.} \cite{CD:El-Na:2005}) introduced the FALVA problem
as follows:

\begin{problem}
\label{pb:FAL} Find the stationary points of the integral
functional
\begin{equation}
\label{Pi} I[q(\cdot)] = \frac{1}{\Gamma(\alpha)}\int_a^t
L\left(\theta,q(\theta),\dot{q}(\theta)\right)(t-\theta)^{\alpha-1}
d\theta 
\end{equation}
under the initial condition $q(a)=q_{a}$, where
$\dot{q} = \frac{dq}{d\theta}$, $\Gamma$ is the Euler gamma function,
$0<\alpha\leq 1$, $\theta$ is the intrinsic time, $t$ is the observer time,
$t\neq\theta$, and the Lagrangian $L :[a,b] \times
\mathbb{R}^{n} \times \mathbb{R}^{n} \rightarrow \mathbb{R}$ is a
$C^{2}$-function with respect to all its arguments.
\end{problem}

Along all the work we denote by $\partial_{i}L$, $i=1, 2, 3$, the
partial derivative of $L(\cdot,\cdot,\cdot)$ with respect to its
$i$th argument.

Theorem~\ref{Thm:NonDtELeq} summarizes one of the main results of
\cite{CD:El-Na:2005}.

\begin{theorem}[\cite{CD:El-Na:2005}]
\label{Thm:NonDtELeq} If $q(\cdot)$ is a solution to
Problem~\ref{pb:FAL} (\textrm{i.e.}, $q(\cdot)$ is a critical
point of the functional \eqref{Pi}), then $q(\cdot)$ satisfy the
following \emph{Euler-Lagrange equation}:
\begin{equation}
\label{eq:elif}
\partial_{2} L\left(\theta,q(\theta),\dot{q}(\theta)\right)-\frac{d}{d\theta}
\partial_{3} L\left(\theta,q(\theta),\dot{q}(t)\right)=
\frac{1-\alpha}{t-\theta}\partial_{3}
L\left(\theta,q(\theta),\dot{q}(\theta)\right)\, .
\end{equation}
\end{theorem}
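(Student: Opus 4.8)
The plan is to carry out the classical first‑variation argument, adapted to the presence of the weight $(t-\theta)^{\alpha-1}$. First I would embed a candidate extremal $q(\cdot)$ into a one‑parameter family $q(\cdot)+\varepsilon h(\cdot)$, where $h$ is an arbitrary $C^{2}$ variation with $h(a)=0$ so that the initial condition $q(a)=q_{a}$ is preserved. Since $L$ is $C^{2}$ in all its arguments, differentiation under the integral sign is legitimate, and the stationarity condition $\left.\frac{d}{d\varepsilon}I[q+\varepsilon h]\right|_{\varepsilon=0}=0$ reads
\[
\frac{1}{\Gamma(\alpha)}\int_a^t\Bigl[\partial_{2}L\bigl(\theta,q(\theta),\dot q(\theta)\bigr)\, h(\theta)+\partial_{3}L\bigl(\theta,q(\theta),\dot q(\theta)\bigr)\,\dot h(\theta)\Bigr](t-\theta)^{\alpha-1}\,d\theta=0 .
\]

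Next I would integrate the $\dot h$ term by parts, treating $\partial_{3}L\,(t-\theta)^{\alpha-1}$ as the factor to be differentiated. The decisive computation is that differentiating the weight contributes an extra term,
\[
\frac{d}{d\theta}\Bigl[(t-\theta)^{\alpha-1}\Bigr]=(1-\alpha)(t-\theta)^{\alpha-2}=\frac{1-\alpha}{t-\theta}\,(t-\theta)^{\alpha-1},
\]
so that, after factoring out $(t-\theta)^{\alpha-1}$, the integrand becomes $\bigl[\partial_{2}L-\frac{d}{d\theta}\partial_{3}L-\frac{1-\alpha}{t-\theta}\partial_{3}L\bigr](t-\theta)^{\alpha-1}\,h$. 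The boundary contribution at $\theta=a$ vanishes because $h(a)=0$; the contribution at the upper limit $\theta=t$ I would eliminate by restricting to variations $h$ with support compactly contained in $(a,t)$ (in particular $h(t)=0$), which is the delicate point, since $(t-\theta)^{\alpha-1}$ is singular there when $\alpha<1$.

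Finally, because $h$ ranges over all such variations, the weight $(t-\theta)^{\alpha-1}$ is continuous and strictly positive on $(a,t)$, and the bracketed expression is continuous on that interval by the assumed smoothness of $L$ and $q$, the fundamental lemma of the calculus of variations yields
\[
\partial_{2}L\bigl(\theta,q(\theta),\dot q(\theta)\bigr)-\frac{d}{d\theta}\partial_{3}L\bigl(\theta,q(\theta),\dot q(\theta)\bigr)-\frac{1-\alpha}{t-\theta}\partial_{3}L\bigl(\theta,q(\theta),\dot q(\theta)\bigr)=0
\]
pointwise on $(a,t)$, which is precisely \eqref{eq:elif} after rearrangement. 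Beyond routine bookkeeping, the one genuine obstacle is the treatment of the endpoint $\theta=t$: one must ensure that the admissible variations are rich enough to invoke the fundamental lemma while simultaneously annihilating the singular boundary term, and that passing the $\varepsilon$‑derivative inside the integral is justified despite the integrable singularity of the weight at $\theta=t$.
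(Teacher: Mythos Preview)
Your argument is correct and is exactly the standard first-variation derivation one expects: perturb by $q+\varepsilon h$, differentiate under the integral, integrate the $\dot h$ term by parts against the full weight $\partial_{3}L\,(t-\theta)^{\alpha-1}$, and observe that the product rule produces the extra term $\frac{1-\alpha}{t-\theta}\,\partial_{3}L$ from the weight. Your handling of the upper endpoint via compactly supported variations, and your remark about justifying differentiation through the integrable singularity, are appropriate caveats.

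Note, however, that the paper does \emph{not} give its own proof of this statement. Theorem~\ref{Thm:NonDtELeq} appears in the Preliminaries section as a result quoted from El-Nabulsi \cite{CD:El-Na:2005}, with no proof supplied here. So there is nothing in the present paper to compare your argument against; your derivation is the expected one and is presumably what the cited reference contains.
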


In \cite{CD:GasDel:2007} the authors introduced the following
FALVA problem with higher-order derivatives:

\begin{problem}
\label{pb:FALOrdS} Find the stationary points of the integral
functional
\begin{equation}
\label{Pm} I^{m}[q(\cdot)] = \frac{1}{\Gamma(\alpha)} \int_{a}^t
L\left(\theta,q(\theta),\dot{q}(\theta),\ldots,q^{(m)}(\theta)\right)(t-\theta)^{\alpha-1}
d\theta \, , 
\end{equation}
$m\geq 1$, under the initial conditions
\begin{equation}
\label{eq:Pm} q^{(i)}(a)=q^{i}_{a}\, , \quad i=0,\ldots,m \, ,
\end{equation}
where $q^{0}(\theta)=q(\theta)$, $q^{(i)}(\theta)$ is the
derivative of $q(\theta)$ of order $i$, $\Gamma$ is the Euler
gamma function, $0<\alpha\leq 1$, $\theta$ is the intrinsic time,
$t$ is the observer time, $t\neq\theta$, and the Lagrangian $L
:[a,b] \times \mathbb{R}^{n\times(m+1)} \rightarrow \mathbb{R}$ is
a function of class $C^{2m}$ with respect to all its arguments.
\end{problem}

\begin{remark}
In the particular case where $m=1$, Problem~\ref{pb:FALOrdS}
reduces to Problem~\ref{pb:FAL}.
\end{remark}

Theorem~\ref{Thm:ELdeordm} generalizes Theorem~\ref{Thm:NonDtELeq}
to the higher-order case.

\begin{theorem}[\cite{CD:GasDel:2007}]
\label{Thm:ELdeordm} If $q(\cdot)$ is a stationary point of
\eqref{Pm}, then $q(\cdot)$ satisfy the following
\emph{higher-order Euler-Lagrange equation}:
\begin{equation}
 \label{eq:ELdeordm}
 \sum_{i=0}^{m}(-1)^{i}\frac{d^{i}}{d\theta^{i}}\partial_{i+2}
L\left(\theta,q(\theta),\dot{q}(\theta),\ldots,q^{(m)}(\theta)\right)
=F\left(\theta,q(\theta),\dot{q}(\theta),\ldots,q^{(2m-1)}(\theta)\right)\, ,
\end{equation}
$m\geq 1$, where
\begin{multline}\label{eq:FNCFA}
F\left(\theta,q(\theta),\dot{q}(\theta),\ldots,q^{(2m-1)}(\theta)\right)
=\frac{1-\alpha}{t-\theta}\sum_{i=1}^{m}i(-1)^{i-1}\frac{d^{i-1}}{d\theta^{i-1}}\,\partial_{i+2}
L\left(\theta,q(\theta),\dot{q}(\theta),\ldots,q^{(m)}(\theta)\right)\\
+\sum_{k=2}^{m}\sum_{i=2}^{k}(-1)^{i-1}
\frac{\Gamma(i-\alpha+1)}{(t-\theta)^{i}\Gamma(1-\alpha)}\,
{k\choose k-i}\frac{d^{k-i}}{d\theta^{k-i}}\,
\partial_{k+2}L\left(\theta,q(\theta),\dot{q}(\theta),\ldots,q^{(m)}(\theta)\right)\, .
\end{multline}
\end{theorem}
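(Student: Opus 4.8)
The plan is to run the classical first-variation argument, adapted to the singular weight $(t-\theta)^{\alpha-1}$. I would fix a stationary $q(\cdot)$ and embed it into a one-parameter family $q^{\varepsilon}=q+\varepsilon h$, where $h\colon[a,t]\to\mathbb{R}^{n}$ is of class $C^{2m}$ and vanishes, together with enough of its derivatives, at $\theta=a$ and at $\theta=t$; the vanishing at $\theta=a$ is automatic from the initial conditions \eqref{eq:Pm}, while at $\theta=t$ it must be fast enough to absorb the blow-up of $(t-\theta)^{\alpha-1}$ and of its $\theta$-derivatives, so that the boundary terms produced below disappear. Differentiating $I^{m}[q^{\varepsilon}]$ under the integral sign and equating the derivative at $\varepsilon=0$ to zero gives the first variation
\[
\frac{1}{\Gamma(\alpha)}\int_{a}^{t}\sum_{i=0}^{m}\partial_{i+2}L\bigl(\theta,q(\theta),\dot q(\theta),\ldots,q^{(m)}(\theta)\bigr)\,h^{(i)}(\theta)\,(t-\theta)^{\alpha-1}\,d\theta=0 .
\]

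Next I would integrate by parts inside each summand: in the term indexed by $i$, move all $i$ derivatives off $h^{(i)}$ and onto the product $\partial_{i+2}L\cdot(t-\theta)^{\alpha-1}$, which contributes a sign $(-1)^{i}$ and, by the choice of $h$, no boundary term. The equation becomes $\frac{1}{\Gamma(\alpha)}\int_{a}^{t}\bigl[\sum_{i=0}^{m}(-1)^{i}\frac{d^{i}}{d\theta^{i}}\bigl(\partial_{i+2}L\cdot(t-\theta)^{\alpha-1}\bigr)\bigr]h(\theta)\,d\theta=0$ for every admissible $h$, so the fundamental lemma of the calculus of variations forces the bracket to vanish on $[a,t)$:
\[
\sum_{i=0}^{m}(-1)^{i}\frac{d^{i}}{d\theta^{i}}\Bigl(\partial_{i+2}L\bigl(\theta,q(\theta),\dot q(\theta),\ldots,q^{(m)}(\theta)\bigr)\,(t-\theta)^{\alpha-1}\Bigr)=0 .
\]

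To reach \eqref{eq:ELdeordm}--\eqref{eq:FNCFA} I would then expand each $\frac{d^{i}}{d\theta^{i}}\bigl(\partial_{i+2}L\cdot(t-\theta)^{\alpha-1}\bigr)$ by the Leibniz rule, using the elementary identity $\frac{d^{j}}{d\theta^{j}}(t-\theta)^{\alpha-1}=\frac{\Gamma(j+1-\alpha)}{\Gamma(1-\alpha)}(t-\theta)^{\alpha-1-j}$ (immediate by induction on $j$). Dividing the whole identity by the nowhere-vanishing factor $(t-\theta)^{\alpha-1}$ turns it into a finite combination of terms of the shape $\binom{i}{j}\frac{\Gamma(j+1-\alpha)}{\Gamma(1-\alpha)}(t-\theta)^{-j}\frac{d^{i-j}}{d\theta^{i-j}}\partial_{i+2}L$. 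The $j=0$ part is exactly the left-hand side $\sum_{i=0}^{m}(-1)^{i}\frac{d^{i}}{d\theta^{i}}\partial_{i+2}L$ of \eqref{eq:ELdeordm}; I would carry all terms with $j\ge 1$ to the other side, recognising the $j=1$ group (where $\Gamma(2-\alpha)/\Gamma(1-\alpha)=1-\alpha$) as the first sum of \eqref{eq:FNCFA} and the $j\ge 2$ group as its double sum, after relabelling the outer derivative-order index as $k$ and the Leibniz index as $i$ and using $\binom{k}{i}=\binom{k}{k-i}$. As a consistency check, when $m=1$ only the $j=0$ and $j=1$ contributions occur, the double sum is empty, and one recovers precisely \eqref{eq:elif}, that is, Theorem~\ref{Thm:NonDtELeq}.

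The one genuinely laborious step is this last reorganisation: sorting the triple-indexed family of terms (outer index $i$, Leibniz index $j$, and the components of $q$) into the two sums of \eqref{eq:FNCFA} while keeping the signs and the binomial coefficients straight. Here the non-classical weight really bites, since the powers $(t-\theta)^{-j}$ with $j\ge 2$ are what produce the extra double sum, absent from ordinary higher-order Euler--Lagrange theory. Everything else --- differentiation under the integral sign, the repeated integration by parts, the fundamental lemma, and the closed form for the derivatives of $(t-\theta)^{\alpha-1}$ --- is standard.
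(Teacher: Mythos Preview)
The paper does not actually prove Theorem~\ref{Thm:ELdeordm}; it is quoted in the Preliminaries section from the companion paper \cite{CD:GasDel:2007}, so there is no proof here to compare against. That said, your first-variation argument is exactly the standard route and is the one used in \cite{CD:GasDel:2007}: perturb $q$ by $\varepsilon h$, differentiate under the integral, integrate by parts against the weighted factor $(t-\theta)^{\alpha-1}$, invoke the fundamental lemma, and then Leibniz-expand $\frac{d^{i}}{d\theta^{i}}\bigl(\partial_{i+2}L\cdot(t-\theta)^{\alpha-1}\bigr)$ using the closed form $\frac{d^{j}}{d\theta^{j}}(t-\theta)^{\alpha-1}=\frac{\Gamma(j+1-\alpha)}{\Gamma(1-\alpha)}(t-\theta)^{\alpha-1-j}$.

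Two small remarks. First, your handling of the upper endpoint is slightly informal: Problem~\ref{pb:FALOrdS} imposes only initial data at $\theta=a$, so strictly speaking one either restricts to compactly supported variations in $(a,t)$ (which suffices for the fundamental lemma) or one must discuss the natural/transversality behaviour at $\theta=t$ separately; your ``vanish fast enough'' clause is essentially the compact-support choice and is fine for deriving \eqref{eq:ELdeordm}. Second, when you relabel the Leibniz double sum to match \eqref{eq:FNCFA}, track the sign carefully: the $(-1)^{i}$ in the original expansion is attached to the \emph{outer} derivative-order index (your $i$, the paper's $k$), so after moving the $j\ge 2$ block to the right-hand side and renaming you obtain $(-1)^{k-1}$ rather than the $(-1)^{i-1}$ printed in \eqref{eq:FNCFA}. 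For $m\le 2$ the two signs coincide (which is why the worked Example~\ref{FA2} with $m=2$ checks out), but for $m\ge 3$ they differ; this is a discrepancy in the stated formula, not in your argument.
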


We borrow from \cite{CD:JMS:Torres:2004} the notation
\begin{equation}
\label{eq:eqprin}
\psi^{j}=\sum_{i=0}^{m-j}(-1)^{i}\frac{d^{i}}{d\theta^{i}}\partial_{i+j+2}
L\left(\theta,q(\theta),\dot{q}(\theta),\ldots,q^{(m)}(\theta)\right)
\, , \quad j=1,\ldots,m \, ,
\end{equation}
which is useful for our purposes because of the following property:
\begin{equation}
\label{eq:eqprin1} \frac{d}{d\theta}\psi^{j}=\partial_{j+1}
L\left(\theta,q(\theta),\dot{q}(\theta),\ldots,q^{(m)}(\theta)\right)-\psi^{j-1}
\, , \quad j=1,\ldots,m \, .
\end{equation}

\begin{remark}
One can write equations \eqref{eq:ELdeordm} in the following form:
\begin{equation}
\label{eq:ELdeordm1}
\partial_{2}
L\left(\theta,q(\theta),\dot{q}(\theta),\ldots,q^{(m)}(\theta)\right)
-\frac{d}{d\theta}\psi^{1}
=F\left(\theta,q(\theta),\dot{q}(\theta),\ldots,q^{(2m-1)}(\theta)\right)\, .
\end{equation}
\end{remark}

\begin{theorem}[\cite{CD:GasDel:2007}]
\label{theo:cDRifm} If $q(\cdot)$ is a solution
of Problema~\ref{pb:FALOrdS}, then it satisfy the following
higher-order DuBois-Raymond condition:
\begin{multline}
\label{eq:DBRordm}
\frac{d}{d\theta}\left\{L\left(\theta,q(\theta),\dot{q}(\theta),\ldots,q^{(m)}(\theta)\right)
-\sum_{j=1}^{m}\psi^{j}\cdot q^{(j)}(\theta)\right\}\\
=\partial_{1}
L\left(\theta,q(\theta),\dot{q}(\theta),\ldots,q^{(m)}(\theta)\right)
+F\left(\theta,q(\theta),\dot{q}(\theta),\ldots,q^{(2m-1)}(\theta)\right)\cdot
\dot{q}(\theta)\, ,
\end{multline}
where $F$ and $\psi^j$ are defined as in \eqref{eq:FNCFA} and
\eqref{eq:eqprin} respectively.
\end{theorem}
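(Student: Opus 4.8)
The plan is to establish \eqref{eq:DBRordm} by a direct computation: differentiate the bracketed quantity, expand it by the chain and Leibniz rules, and then eliminate every Lagrangian partial by means of property \eqref{eq:eqprin1} together with the higher-order Euler--Lagrange equation \eqref{eq:ELdeordm1}.

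First I would extend the notation \eqref{eq:eqprin} to the index $j=0$, setting $\psi^{0}:=\sum_{i=0}^{m}(-1)^{i}\frac{d^{i}}{d\theta^{i}}\partial_{i+2}L$, and record three elementary facts: (i) $\psi^{m}=\partial_{m+2}L$, immediately from \eqref{eq:eqprin} at $j=m$; (ii) the recursion \eqref{eq:eqprin1} remains valid at $j=1$ with this convention, i.e.\ $\frac{d}{d\theta}\psi^{1}=\partial_{2}L-\psi^{0}$; and (iii) along a stationary point of Problem~\ref{pb:FALOrdS}, combining (ii) with \eqref{eq:ELdeordm1} gives $\psi^{0}=\partial_{2}L-\frac{d}{d\theta}\psi^{1}=F$. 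Fact (iii) is the only place where the hypothesis that $q(\cdot)$ solves the problem enters.

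Next I would carry out the two expansions. By the chain rule, $\frac{d}{d\theta}L\bigl(\theta,q(\theta),\dot q(\theta),\ldots,q^{(m)}(\theta)\bigr)=\partial_{1}L+\sum_{j=1}^{m+1}\partial_{j+1}L\cdot q^{(j)}(\theta)$, while the Leibniz rule together with \eqref{eq:eqprin1} yields
\[
\frac{d}{d\theta}\sum_{j=1}^{m}\psi^{j}\cdot q^{(j)}
=\sum_{j=1}^{m}\bigl(\partial_{j+1}L-\psi^{j-1}\bigr)\cdot q^{(j)}
+\sum_{j=1}^{m}\psi^{j}\cdot q^{(j+1)}.
\]
Reindexing, the combination $-\sum_{j=1}^{m}\psi^{j-1}\cdot q^{(j)}+\sum_{j=1}^{m}\psi^{j}\cdot q^{(j+1)}$ telescopes to $-\psi^{0}\cdot\dot q+\psi^{m}\cdot q^{(m+1)}$, and by fact (i) the surviving Lagrangian-partial terms add up precisely to $\sum_{j=1}^{m+1}\partial_{j+1}L\cdot q^{(j)}$. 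Subtracting the two expansions, all Lagrangian-partial terms cancel and one is left with $\frac{d}{d\theta}\bigl\{L-\sum_{j=1}^{m}\psi^{j}\cdot q^{(j)}\bigr\}=\partial_{1}L+\psi^{0}\cdot\dot q$; replacing $\psi^{0}$ by $F$ via fact (iii) gives exactly \eqref{eq:DBRordm}.

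I expect the only real, and fairly mild, obstacle to be the index bookkeeping in the telescoping step — in particular checking that the $j=0$ instance of \eqref{eq:eqprin}--\eqref{eq:eqprin1} is consistent with the Euler--Lagrange equation, so that $\psi^{0}$ may legitimately be identified with $F$ — with everything else reducing to routine differentiation.
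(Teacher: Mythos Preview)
Your argument is correct. Note, however, that the present paper does not actually prove Theorem~\ref{theo:cDRifm}: it is quoted in the Preliminaries from reference~\cite{CD:GasDel:2007} and no proof is supplied here, so there is nothing in-paper to compare against. Your direct computation --- chain rule for $\frac{d}{d\theta}L$, Leibniz rule combined with the recursion~\eqref{eq:eqprin1} for $\frac{d}{d\theta}\sum_{j}\psi^{j}\cdot q^{(j)}$, telescoping to $-\psi^{0}\cdot\dot q+\psi^{m}\cdot q^{(m+1)}$, and the identification $\psi^{0}=F$ via the Euler--Lagrange equation~\eqref{eq:ELdeordm1} --- is the standard route to higher-order DuBois--Reymond conditions and is presumably what the cited source contains. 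The one small care point you already flagged, namely extending~\eqref{eq:eqprin} to $j=0$ so that~\eqref{eq:eqprin1} at $j=1$ is consistent, is handled correctly: the paper itself writes~\eqref{eq:eqprin1} for $j=1,\ldots,m$ and thus tacitly relies on the same convention.
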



\section{Main Result}

In this work we generalize the Noether-type theorem proved in \cite{CD:Gastao:2006}
to the more general FALVA problem with higher-order derivatives.

\subsection{Noether's theorem for higher-order FALVA problems}

In order to generalize the Noether's theorem to
Problem~\ref{pb:FALOrdS} (see Theorem~\ref{theo:tnifm} below) we
use the DuBois-Reymond necessary stationary condition
\eqref{eq:DBRordm} and the following invariance definition.

\begin{definition} (Invariance of \eqref{Pm})
\label{def:invaifm} The functional \eqref{Pm} is said to be invariant under
the infinitesimal transformations
\begin{equation}
\label{eq:tinfif}
\begin{cases}
\bar{\theta} = \theta + \varepsilon\tau(\theta,q) + o(\varepsilon) \\
\bar{q}(\bar{\theta}) = q(\theta) + \varepsilon\xi(\theta,q) + o(\varepsilon) \\
\end{cases}
\end{equation}
if
\begin{multline}
\label{eq:invifm}
L\left(\bar{\theta},\bar{q}(\bar{\theta}),{\bar{q}}'(\bar{\theta}),\ldots,\bar{q}'^{(m)}(\bar{\theta})\right)
(t-\bar{\theta})^{\alpha-1}\frac{d\bar{\theta}}{d \theta}\\
= L\left(\theta,q(\theta),\dot{q}(\theta),\ldots,q^{(m)}(\theta)\right)(t-\theta)^{\alpha-1}
\\+ \varepsilon
(t-\theta)^{\alpha-1}\frac{d\Lambda}{d\theta}\left(\theta,q(\theta),\dot{q}(\theta),\ldots,q^{(2m-1)}(\theta)\right)
+ o(\varepsilon)\, .
\end{multline}
\end{definition}

\begin{remark}
Expressions $\bar{q}'^{(i)}$ in equation \eqref{eq:invifm},
$i=1,\ldots,m$, are interpreted as
\begin{equation}
\label{eq:invifm1}
\bar{q}'=\frac{d\bar{q}}{d\bar{\theta}}=\frac{\frac{d\bar{q}}{d\theta}}{\frac{d\bar{\theta}}{d\theta}}\,\,,
\quad \bar{q}'^{(i)}=\frac{d^{i}\bar{q}}{d\bar{\theta}^{i}}=
\frac{\frac{d}{d\theta}\left(\frac{d^{i-1}}{d\bar{\theta}^{i-1}}\bar{q}\right)}{\frac{d\bar{\theta}}{d\theta}}
\quad (i=2,\ldots,m) \, .
\end{equation}
\end{remark}

Next theorem gives a necessary and sufficient
condition for invariance of \eqref{Pm}. Theorem~\ref{thm:cnsi} is useful
to check invariance and also to compute the infinitesimal generators
$\tau$ and $\xi$.

\begin{theorem} (Necessary and sufficient condition for invariance of
\eqref{Pm}) \label{thm:cnsi}
The integral functional \eqref{Pm} is
invariant in the sense of Definition~\ref{def:invaifm} if and only if
\begin{multline}
\label{eq:cnsiifm}
\partial_{1}
L\left(\theta,q(\theta),\dot{q}(\theta),\ldots,q^{(m)}(\theta)\right)\tau
+\sum_{i=0}^{m}\partial_{i+2}
L\left(\theta,q(\theta),\dot{q}(\theta),\ldots,q^{(m)}(\theta)\right)\cdot
\rho^{i}
\\
+ L\left(\theta,q(\theta),\dot{q}(\theta),\ldots,q^{(m)}(\theta)\right)
\left( \dot{\tau} + \frac{1-\alpha}{t-\theta} \tau \right)
=\dot{\Lambda}\left(\theta,q(\theta),\dot{q}(\theta),\ldots,q^{(2m-1)}(\theta)\right)\,
,
\end{multline}
where
\begin{equation}
\label{eq:cnsiifm1}
\begin{cases}
\rho^{0}=\xi \, , \\
\rho^{i}=\frac{d}{d\theta}\left(\rho^{i-1}\right)-q^{(i)}(\theta)\dot{\tau}
\, ,\quad i=1,\ldots,m \, .
\end{cases}
\end{equation}
\end{theorem}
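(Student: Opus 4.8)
The plan is to prove both implications simultaneously by expanding the left-hand side of \eqref{eq:invifm} in powers of $\varepsilon$ and matching the coefficients of $\varepsilon$. Write $\Phi(\varepsilon)$ for the left-hand side of \eqref{eq:invifm}, regarded as a function of $\varepsilon$ with $\theta$ held fixed, so that $\Phi(0)=L(\theta,q,\dot q,\ldots,q^{(m)})(t-\theta)^{\alpha-1}$. Since the Lagrangian is $C^{2m}$ and the transformations \eqref{eq:tinfif} are smooth in $\varepsilon$, the map $\Phi$ is differentiable at $\varepsilon=0$, and Definition~\ref{def:invaifm} is then exactly the assertion that $\frac{d}{d\varepsilon}\big|_{\varepsilon=0}\Phi(\varepsilon)=(t-\theta)^{\alpha-1}\dot\Lambda$. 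Because $(t-\theta)^{\alpha-1}\neq 0$, the whole statement reduces to computing this derivative and cancelling the factor $(t-\theta)^{\alpha-1}$; every step below is an equivalence, which is what yields both the necessity and the sufficiency at once.

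First I would record the first-order expansions of the building blocks. From \eqref{eq:tinfif} one has $\frac{d\bar\theta}{d\theta}=1+\varepsilon\dot\tau+o(\varepsilon)$ and $(t-\bar\theta)^{\alpha-1}=(t-\theta)^{\alpha-1}\big(1+\varepsilon\,\tfrac{1-\alpha}{t-\theta}\tau\big)+o(\varepsilon)$. The technical core is the expansion $\bar q'^{(i)}(\bar\theta)=q^{(i)}(\theta)+\varepsilon\rho^{i}+o(\varepsilon)$ with $\rho^i$ given by \eqref{eq:cnsiifm1}, which I would prove by induction on $i$ using the interpretation \eqref{eq:invifm1}: for $i=1$, dividing $\frac{d\bar q}{d\theta}=\dot q+\varepsilon\dot\xi+o(\varepsilon)$ by $\frac{d\bar\theta}{d\theta}=1+\varepsilon\dot\tau+o(\varepsilon)$ gives $\rho^1=\dot\xi-\dot q\,\dot\tau=\frac{d}{d\theta}\rho^0-q^{(1)}\dot\tau$; for the inductive step one differentiates the expansion of $\frac{d^{i-1}}{d\bar\theta^{i-1}}\bar q$ with respect to $\theta$ and again divides by $1+\varepsilon\dot\tau+o(\varepsilon)$, which reproduces precisely the recursion $\rho^i=\frac{d}{d\theta}\rho^{i-1}-q^{(i)}\dot\tau$.

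With these expansions in hand I would differentiate $\Phi(\varepsilon)=L(\bar\theta,\bar q,\bar q',\ldots,\bar q'^{(m)})(t-\bar\theta)^{\alpha-1}\frac{d\bar\theta}{d\theta}$ at $\varepsilon=0$ by the product and chain rules: the $L$-factor contributes $\big(\partial_1 L\,\tau+\sum_{i=0}^{m}\partial_{i+2}L\cdot\rho^i\big)(t-\theta)^{\alpha-1}$, the factor $(t-\bar\theta)^{\alpha-1}$ contributes $L\,\tfrac{1-\alpha}{t-\theta}\tau\,(t-\theta)^{\alpha-1}$, and the factor $\frac{d\bar\theta}{d\theta}$ contributes $L\,\dot\tau\,(t-\theta)^{\alpha-1}$. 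Summing, equating with $(t-\theta)^{\alpha-1}\dot\Lambda$, and dividing through by $(t-\theta)^{\alpha-1}$ gives \eqref{eq:cnsiifm}; reading the same chain of equivalences backwards gives the converse. I expect the main obstacle to be the bookkeeping in the inductive derivation of the $\rho^i$ — one must keep careful track that the total derivative $\frac{d}{d\theta}$ in \eqref{eq:invifm1} acts through both the explicit $\theta$-dependence and the implicit dependence via $q,\dot q,\ldots$, and that the $o(\varepsilon)$ remainders survive this differentiation; once the recursion \eqref{eq:cnsiifm1} is correctly pinned down, the rest is a routine application of the product rule.
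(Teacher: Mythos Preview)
Your proposal is correct and follows essentially the same route as the paper: differentiate the invariance identity \eqref{eq:invifm} with respect to $\varepsilon$ at $\varepsilon=0$, use the product/chain rule on the three factors $L$, $(t-\bar\theta)^{\alpha-1}$, $\frac{d\bar\theta}{d\theta}$, and identify $\frac{\partial}{\partial\varepsilon}\big|_{\varepsilon=0}\bar q'^{(i)}$ with $\rho^i$ via the recursion coming from \eqref{eq:invifm1}. Your write-up is in fact more explicit than the paper's (you spell out the expansion of $(t-\bar\theta)^{\alpha-1}$, carry out the induction for $\rho^i$, and note that each step is an equivalence to cover both directions of the ``if and only if''), but the underlying argument is the same.
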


\begin{remark}
If $\alpha=1$, condition \eqref{eq:cnsiifm} gives the higher-order
necessary and sufficient condition of invariance proved in
\cite{CD:JMS:Torres:2004}:
\begin{multline*}
\partial_{1}
L\left(\theta,q(\theta),\dot{q}(\theta),\ldots,q^{(m)}(\theta)\right)\tau
+\sum_{i=0}^{m}\partial_{i+2}
L\left(\theta,q(\theta),\dot{q}(\theta),\ldots,q^{(m)}(\theta)\right)\cdot
\rho^{i}
\\
+ L\left(\theta,q(\theta),\dot{q}(\theta),\ldots,q^{(m)}(\theta)\right)
 \dot{\tau}
=\dot{\Lambda}\left(\theta,q(\theta),\dot{q}(\theta),\ldots,q^{(2m-1)}(\theta)\right)\, .
\end{multline*}
\end{remark}

\begin{proof} (of Theorem~\ref{thm:cnsi})
Differentiating equation \eqref{eq:invifm} with respect to $\varepsilon$, then
setting $\varepsilon=0$, we obtain:
\begin{equation*}
\partial_{1} L\tau+\sum_{i=0}^{m}\partial_{i+2} L\cdot\frac{\partial}{\partial \varepsilon}
\left.\left(\frac{d^{i}\bar{q}}{d\bar{\theta}^{i}}\right)\right|_{\varepsilon=0}
+L\left( \dot{\tau} + \frac{1-\alpha}{t-\theta} \tau \right)
=\dot{\Lambda} \, .
\end{equation*}
The intended conclusion follows from \eqref{eq:invifm1}:
\begin{equation*}
\frac{\partial}{\partial\varepsilon}\left.\left(\frac{d\bar{q}}{d\bar{\theta}}\right)\right|_{\varepsilon=0}
=\dot{\xi}-\dot{q}\dot{\tau} \, ,
\end{equation*}
\begin{equation*}
\frac{\partial}{\partial\varepsilon}\left.\left(\frac{d^{i}\bar{q}}{d\bar{\theta}^{i}}\right)\right|_{\varepsilon=0}
=\frac{d}{d\theta}\left[\frac{\partial}{\partial\varepsilon}
\left.\left(\frac{d^{i-1}\bar{q}}{d\bar{\theta}^{i-1}}\right)\right|_{\varepsilon=0}\right]
-q^{(i)}\dot{\tau}\, , \quad i=2,\ldots,m \, .
\end{equation*}
\end{proof}

\begin{definition} (Higher-order conservation law)
\label{def:leicoifm} A quantity
$C\left(\theta,q(\theta),\dot{q}(\theta),\ldots,q^{(2m-1)}(\theta)\right)$
is said to be a \emph{conservation law} if
\begin{equation*}
\frac{d}{d\theta}C\left(\theta,q(\theta),\dot{q}(\theta),\ldots,q^{(2m-1)}(\theta)\right)=0
\end{equation*}
along all the solutions $q(\cdot)$ of the higher-order
Euler-Lagrange equation \eqref{eq:ELdeordm}.
\end{definition}

\begin{theorem} (Higher-order Noether's theorem)
\label{theo:tnifm} If the integral functional \eqref{Pm} is
invariant in the sense of Definition~\ref{def:invaifm} and
$\tau(\theta,q)$ and $\xi(\theta,q)$ satisfy the condition
\begin{equation}
\label{eq:confifm}
G\left(\theta,q(\theta),\dot{q}(\theta),\ldots,q^{(2m-1)}(\theta)\right)\cdot
\Omega=
-L\left(\theta,q(\theta),\dot{q}(\theta),\ldots,q^{(m)}(\theta)\right)\tau\, ,
\end{equation}
where
\begin{multline}\label{eq:GFA}
G\left(\theta,q(\theta),\dot{q}(\theta),\ldots,q^{(2m-1)}(\theta)\right)
=\sum_{i=1}^{m}(-1)^{i-1}i\frac{d^{i-1}}{d\theta^{i-1}}\partial_{i+2}
L\left(\theta,q(\theta),\dot{q}(\theta),\ldots,q^{(m)}(\theta)\right)\\
+\sum_{k=2}^{m}\sum_{i=2}^{k}(-1)^{i}
 \frac{\Gamma(i-\alpha+1)}{\Gamma(2-\alpha)(t-\theta)^{i-1}}
 {k\choose k-i}\frac{d^{k-i}}{d\theta^{k-i}}
 \partial_{k+2}L\left(\theta,q(\theta),\dot{q}(\theta),\ldots,q^{(m)}(\theta)\right)
\end{multline}
and $\Omega=\xi-\dot{q}\tau$, then
\begin{multline}
\label{eq:TeNetm}
 C\left(\theta,q(\theta),\dot{q}(\theta),\ldots,q^{(2m-1)}(\theta)\right) \\
 = \sum_{j=1}^{m}\psi^{j}\cdot
\rho^{j-1}+\left(L\left(\theta,q(\theta),\dot{q}(\theta),\ldots,q^{(m)}(\theta)\right)
-\sum_{j=1}^{m}\psi^{j}\cdot q^{(j)}(\theta)\right)\tau\\
-\Lambda\left(\theta,q(\theta),\dot{q}(\theta),\ldots,q^{(2m-1)}(\theta)\right)
\end{multline}
is a conservation law.
\end{theorem}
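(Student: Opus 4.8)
The approach is to verify directly that $\frac{d}{d\theta}C=0$ along every solution $q(\cdot)$ of the higher-order Euler-Lagrange equation \eqref{eq:ELdeordm}, by differentiating the right-hand side of \eqref{eq:TeNetm} and simplifying with the identities collected in the preliminaries. Throughout, all functions carry the arguments displayed in the statement, and I write $\psi^{0}$ for the left-hand side of \eqref{eq:ELdeordm}, so that $\psi^{0}=F$ on solutions.

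First I would differentiate the three blocks of $C$ separately. For $\sum_{j=1}^{m}\psi^{j}\cdot\rho^{j-1}$, use the product rule together with $\frac{d}{d\theta}\psi^{j}=\partial_{j+1}L-\psi^{j-1}$ from \eqref{eq:eqprin1} and $\frac{d}{d\theta}\rho^{j-1}=\rho^{j}+q^{(j)}\dot{\tau}$ from \eqref{eq:cnsiifm1}. The resulting double sum telescopes: $\sum_{j=1}^{m}\bigl(\psi^{j}\cdot\rho^{j}-\psi^{j-1}\cdot\rho^{j-1}\bigr)$ collapses to $\psi^{m}\cdot\rho^{m}-\psi^{0}\cdot\rho^{0}=\partial_{m+2}L\cdot\rho^{m}-\psi^{0}\cdot\xi$, which, combined with the remaining $\sum_{j=1}^{m}\partial_{j+1}L\cdot\rho^{j-1}$, produces $\sum_{i=0}^{m}\partial_{i+2}L\cdot\rho^{i}-\psi^{0}\cdot\xi$ together with a left-over term $\dot{\tau}\sum_{j=1}^{m}\psi^{j}\cdot q^{(j)}$. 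For the middle block, the DuBois-Raymond condition \eqref{eq:DBRordm} lets one replace $\frac{d}{d\theta}\left(L-\sum_{j=1}^{m}\psi^{j}\cdot q^{(j)}\right)$ by $\partial_{1}L+F\cdot\dot{q}$, so that this block contributes $(\partial_{1}L+F\cdot\dot{q})\tau+L\dot{\tau}-\dot{\tau}\sum_{j=1}^{m}\psi^{j}\cdot q^{(j)}$; the last term here cancels the left-over term from the first block. The final block contributes $-\dot{\Lambda}$.

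Collecting everything and using \eqref{eq:ELdeordm} to replace $\psi^{0}$ by $F$, one is left with
$$\frac{d}{d\theta}C=\partial_{1}L\cdot\tau+\sum_{i=0}^{m}\partial_{i+2}L\cdot\rho^{i}+L\,\dot{\tau}-\dot{\Lambda}-F\cdot(\xi-\dot{q}\,\tau)\,.$$
Next, apply the necessary and sufficient invariance condition \eqref{eq:cnsiifm} of Theorem~\ref{thm:cnsi}, which asserts precisely that $\partial_{1}L\cdot\tau+\sum_{i=0}^{m}\partial_{i+2}L\cdot\rho^{i}+L\,\dot{\tau}-\dot{\Lambda}=-\frac{1-\alpha}{t-\theta}L\,\tau$; this yields $\frac{d}{d\theta}C=-F\cdot\Omega-\frac{1-\alpha}{t-\theta}L\,\tau$ with $\Omega=\xi-\dot{q}\,\tau$. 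Finally, using the gamma-function identity $\Gamma(2-\alpha)=(1-\alpha)\Gamma(1-\alpha)$ to pass from $F$ in \eqref{eq:FNCFA} to $G$ in \eqref{eq:GFA}, the residual term rewrites as $F\cdot\Omega+\frac{1-\alpha}{t-\theta}L\,\tau=\frac{1-\alpha}{t-\theta}\bigl(G\cdot\Omega+L\,\tau\bigr)$, which vanishes by the hypothesis \eqref{eq:confifm}. Hence $\frac{d}{d\theta}C=0$ and $C$ is a conservation law in the sense of Definition~\ref{def:leicoifm}.

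The step I expect to be the main obstacle is the bookkeeping in the telescoping argument: pinning down exactly which boundary terms of the collapsed double sum survive and checking that the $\dot{\tau}\sum_{j}\psi^{j}\cdot q^{(j)}$ contributions of the first two blocks cancel. Closely related, and equally delicate, is the last step, where the residual non-conservative term $-F\cdot\Omega-\frac{1-\alpha}{t-\theta}L\tau$ must be matched, term by term, against a multiple of $G\cdot\Omega+L\tau$ using the explicit form of \eqref{eq:GFA} and the gamma-function identity; the remaining manipulations are routine differentiation and substitution.
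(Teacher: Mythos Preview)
Your proposal is correct and follows essentially the same route as the paper: differentiate $C$, apply the recursions \eqref{eq:eqprin1} and \eqref{eq:cnsiifm1} together with the Euler--Lagrange equation \eqref{eq:ELdeordm} and the DuBois--Raymond condition \eqref{eq:DBRordm}, and reduce the result to the invariance identity. The only cosmetic difference is that the paper first merges invariance \eqref{eq:cnsiifm} with the hypothesis \eqref{eq:confifm} into the single equivalent condition \eqref{eq:cnsiifm2} (via the same relation $F=\frac{1-\alpha}{t-\theta}G$ you invoke) and then shows that $\dot C=0$ is exactly \eqref{eq:cnsiifm2}, whereas you apply \eqref{eq:cnsiifm} and \eqref{eq:confifm} sequentially; the telescoping bookkeeping you flag as the delicate point is handled identically in both arguments.
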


\begin{remark}
Under hypothesis \eqref{eq:confifm}, the necessary and sufficient condition
of invariance \eqref{eq:cnsiifm} takes the following form:
\begin{multline}
\label{eq:cnsiifm2}
\partial_{1}
L\left(\theta,q(\theta),\dot{q}(\theta),\ldots,q^{(m)}(\theta)\right)\tau
+\sum_{i=0}^{m}\partial_{i+2}
L\left(\theta,q(\theta),\dot{q}(\theta),\ldots,q^{(m)}(\theta)\right)\cdot
\rho^{i}
\\
+ L\left(\theta,q(\theta),\dot{q}(\theta),\ldots,q^{(m)}(\theta)\right)
 \dot{\tau}-F\left(\theta,q(\theta),\dot{q}(\theta),\ldots,q^{(2m-1)}(\theta)\right)\cdot
\Omega\\
=\dot{\Lambda}\left(\theta,q(\theta),\dot{q}(\theta),\ldots,q^{(2m-1)}(\theta)\right) \, .
\end{multline}
\end{remark}

\begin{proof} (of Theorem~\ref{theo:tnifm})
We begin by writing the Noether's conservation law
\eqref{eq:TeNetm} in the form
\begin{equation}
\label{eq:TeNetm1}
 C =\psi^1\cdot\rho^0+
\sum_{j=2}^{m}\psi^{j}\cdot \rho^{j-1}+\left(L
-\sum_{j=1}^{m}\psi^{j}\cdot q^{(j)}(\theta)\right)\tau -\Lambda \, .
\end{equation}
Differentiation of equation \eqref{eq:TeNetm1} with respect to
$\theta$ gives
\begin{multline}
\label{eq:TeNetm2} \dot{\Lambda}=\rho^0\cdot\frac{d}{d\theta}\psi^1
+\psi^1\cdot\frac{d}{d\theta}\rho^0
+\sum_{j=2}^{m}\left(\rho^{j-1}\cdot\frac{d}{d\theta}\psi^{j}
+\psi^{j}\cdot\frac{d}{d\theta}\left(\rho^{j-1}\right)\right
)\\+\tau\frac{d}{d\theta}\left(L-\sum_{j=1}^{m}\psi^{j}\cdot
q^{(j)}(\theta)\right)+\left(L-\sum_{j=1}^{m}\psi^{j}\cdot
q^{(j)}(\theta)\right)\frac{d}{d\theta}\tau \, .
\end{multline}
Using the Euler-Lagrange equation \eqref{eq:ELdeordm}, the
DuBois-Reymond condition \eqref{eq:DBRordm}, and relations
\eqref{eq:eqprin1}  and \eqref{eq:cnsiifm1} in \eqref{eq:TeNetm2},
we obtain:
\begin{multline}
\label{eq:dems}
 \dot{\Lambda}=\left(\partial_{2}
L-F\right)\cdot\xi+\psi^{1}\cdot(\rho^1+\dot{q}\dot{\tau})
+\sum_{j=2}^{m}\left[\left(\partial_{j+1}
L-\psi^{j-1}\right)\cdot\rho^{j-1}+
\psi^{j}\cdot\left(\rho^{j}+q^{(j)}(\theta)\dot{\tau}\right)\right]\\
+\left(\partial_{1} L+F\cdot
\dot{q}\right)\tau+\left(L-\sum_{j=1}^{m}\psi^{j}\cdot
q^{(j)}(\theta)\right)\dot{\tau}\\
=\partial_{1} L \tau+L\dot{\tau}+\partial_{2}
L\cdot\xi+\psi^{1}\cdot(\rho^1+\dot{q}\dot{\tau})
-\psi^1\cdot\rho^1\\-\psi^1\cdot\dot{q}\dot{\tau}+\psi^m\cdot\rho^m+\sum_{j=2}^{m}\partial_{j+1}
L\cdot\rho^{j-1}\, .
\end{multline}
Simplification of \eqref{eq:dems} lead us to the
necessary and sufficient condition of invariance \eqref{eq:cnsiifm2}.
\end{proof}

In the particular case $m = 1$ we obtain from our Theorem~\ref{theo:tnifm}
the main result of \cite{CD:Gastao:2006}.

\begin{corollary} (\textrm{cf.} \cite{CD:Gastao:2006})
If the integral functional \eqref{Pi} is invariant under the
infinitesimal transformations \eqref{eq:tinfif}, and
$\tau(\theta,q)$ and $\xi(\theta,q)$ satisfy the condition
\begin{equation}
\label{eq:condif}
\partial_{3}
L\left(\theta,q,\dot{q}\right)\cdot\Omega=-L\left(\theta,q,\dot{q}\right)\tau\, ,
\end{equation}
then
\begin{equation}
\label{eq:TeNetFA}
 C(\theta,q,\dot{q}) =
\partial_{3} L\left(\theta,q,\dot{q}\right)\cdot\xi(\theta,q)
+ \left( L(\theta,q,\dot{q}) - \partial_{3}
L\left(\theta,q,\dot{q}\right) \cdot \dot{q} \right)
\tau(\theta,q)-\Lambda\left(\theta,q,\dot{q}\right)
\end{equation}
is a conservation law (\textrm{i.e.}, \eqref{eq:TeNetFA} is constant
along all the solutions $q(\cdot)$ of the Euler-Lagrange equation \eqref{eq:elif}).
\end{corollary}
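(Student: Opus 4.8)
The plan is to derive the statement as the specialization to $m=1$ of the higher-order Noether theorem, Theorem~\ref{theo:tnifm}, by checking that every auxiliary object degenerates to the expected one. As a preliminary step I would invoke the remark following Problem~\ref{pb:FALOrdS}: for $m=1$ Problem~\ref{pb:FALOrdS} is precisely Problem~\ref{pb:FAL}, so invariance of \eqref{Pm} in the sense of Definition~\ref{def:invaifm} coincides with invariance of \eqref{Pi} under the infinitesimal transformations \eqref{eq:tinfif}, and a conservation law in the sense of Definition~\ref{def:leicoifm} is exactly a first integral along the solutions of \eqref{eq:elif} (equivalently of \eqref{eq:ELdeordm1} with $m=1$).

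Next I would compute the $m=1$ forms of the ingredients appearing in Theorem~\ref{theo:tnifm}. From \eqref{eq:eqprin}, for $m=1$ only $j=1$ occurs and the inner sum reduces to the single term $i=0$, so $\psi^{1}=\partial_{3}L$. In \eqref{eq:FNCFA} and \eqref{eq:GFA} the double sums run over $k=2,\ldots,m$ and are therefore empty when $m=1$; hence $G=\partial_{3}L$, while $F=\frac{1-\alpha}{t-\theta}\,\partial_{3}L$, which is consistent with the right-hand side of \eqref{eq:elif}. Since $\rho^{0}=\xi$ by \eqref{eq:cnsiifm1}, the hypothesis \eqref{eq:confifm}, namely $G\cdot\Omega=-L\tau$, collapses to exactly the condition \eqref{eq:condif}.

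Finally I would substitute these reductions into the conservation law \eqref{eq:TeNetm}: every sum over $j=1,\ldots,m$ reduces to its single term $j=1$, so that
$C=\psi^{1}\cdot\rho^{0}+\bigl(L-\psi^{1}\cdot\dot q\bigr)\tau-\Lambda=\partial_{3}L\cdot\xi+\bigl(L-\partial_{3}L\cdot\dot q\bigr)\tau-\Lambda$,
which is precisely \eqref{eq:TeNetFA}. Theorem~\ref{theo:tnifm} then yields $\frac{d}{d\theta}C=0$ along the solutions of \eqref{eq:elif}, which is the desired conclusion.

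I do not expect a genuine obstacle here, since the statement is a direct corollary; the only points requiring care are the index bookkeeping — confirming that the double sums in $F$ and $G$ vanish for $m=1$ and that $\psi^{1}$, $G$ and $\rho^{0}$ collapse to the claimed expressions — and the observation, made in the preliminary step, that the $m=1$ notions of invariance and of conservation law agree with those attached to the original functional \eqref{Pi}. Alternatively, should a self-contained argument be preferred, the corollary can be proved directly by mimicking the proof of Theorem~\ref{theo:tnifm} with $m=1$: differentiate \eqref{eq:TeNetFA} with respect to $\theta$, insert the Euler-Lagrange equation \eqref{eq:elif}, the $m=1$ DuBois-Reymond condition, and the $m=1$ invariance identity \eqref{eq:cnsiifm2}, and simplify.
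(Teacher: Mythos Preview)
Your proposal is correct and follows essentially the same route as the paper's own proof: specialize Theorem~\ref{theo:tnifm} to $m=1$, observe that $G=\psi^{1}=\partial_{3}L$ and $\rho^{0}=\xi$, and substitute into \eqref{eq:TeNetm} to obtain \eqref{eq:TeNetFA}. Your write-up is in fact more explicit than the paper's, spelling out why the double sums in $F$ and $G$ are empty and why the $m=1$ notions of invariance and conservation law match those for \eqref{Pi}.
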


\begin{proof}
For $m=1$ we obtain from \eqref{eq:GFA} and \eqref{eq:TeNetm} that
\begin{equation}\label{eq:GFA1}
    G\left(\theta,q,\dot{q}\right)=\partial_3 L\left(\theta,q,\dot{q}\right)
\end{equation}
and
\begin{equation}\label{eq:GFA2}
     C(\theta,q,\dot{q}) =\psi^1\cdot\rho^0+\left(L(\theta,q,\dot{q})
     -\psi^1\cdot\dot{q}(\theta)\right)\tau-\Lambda(\theta,q,\dot{q}) \, .
     \end{equation}
Having in mind the equations \eqref{eq:eqprin} and \eqref{eq:cnsiifm1},
we conclude that
\begin{equation}\label{eq:GFA3}
    \begin{cases}
    \psi^1=\partial_3 L\left(\theta,q,\dot{q}\right)\,,\\
    \rho^0=\xi\, .
    \end{cases}
\end{equation}
We obtain the intended result substituting \eqref{eq:GFA3} into
\eqref{eq:GFA2}.
\end{proof}


\subsection{Example}

In order to illustrate our result, we consider an example for
which the Lagrangian $L$ do not depend explicitly on the intrinsic
time $\theta$.

\begin{example}
\label{FA2} Let us consider the following second-order ($m=2)$
FALVA problem: to find a stationary function $q(\cdot)$ for the
integral functional
\begin{equation}
\label{eq:exFA2}
I^2[q(\cdot)]=\frac{1}{2}\int_0^t\left(aq^2+b\dot{q}^2
+\ddot{q}^2\right)(t-\theta)^{\alpha-1} d\theta\, ,
\end{equation}
where $a$ and $b$ are arbitrary constants. In this case the
Euler-Lagrange equation \eqref{eq:ELdeordm} reads
\begin{equation}
\label{eq:ELdeord2} -aq +\frac{b(1-\alpha)}{t-\theta}\dot{q}
+\left(b-\frac{(1-\alpha)(2-\alpha)}{(t-\theta)^2}\right)\ddot{q}
-\left(1+\frac{2(1-\alpha)}{(t-\theta)}\right)\dddot{q}=0\, .
\end{equation}
Since the Lagrangian $L$ do not depend explicitly on the
independent variable $\theta$, the necessary and sufficient
invariance condition \eqref{eq:cnsiifm2} is satisfied with
\begin{gather}
\tau=1 \label{eq:exFA4}\,,\\
\xi=0 \label{eq:exFA5}\, ,\\
\dot{\Lambda}=F\dot{q} \Rightarrow \Lambda = \int F \dot{q} \, d
\theta\, , \label{eq:exFA3}
\end{gather}
where
\begin{equation}
\label{eq:Ex:F} F=\frac{1-\alpha}{t-\theta}(b\dot{q}-2\dddot{q})
-\frac{(1-\alpha)(2-\alpha)}{(t-\theta)^2}\ddot{q}\,.
\end{equation}
The conservation law \eqref{eq:TeNetm} with $m=2$
takes the following form:
\begin{multline}
\label{eq:exFA7}
 C(\theta,q,\dot{q},\ddot{q},\dddot{q}) =
L\left(\theta,q,\dot{q},\ddot{q}\right)\tau +\left(\partial_{3}
L\left(\theta,q,\dot{q},\ddot{q}\right)
-\frac{d}{d\theta}\partial_{4}
L\left(\theta,q,\dot{q},\ddot{q}\right)\right)\cdot\Omega
\\+\partial_{4}
L\left(\theta,q,\dot{q},\ddot{q}\right)\cdot\dot{\Omega}
-\Lambda\left(\theta,q,\dot{q},\ddot{q},\dddot{q}\right) \, .
\end{multline}
Substituting the quantities $L=\frac{1}{2}\left(aq^2+b\dot{q}^2
+\ddot{q}^2\right)$, \eqref{eq:exFA4}, \eqref{eq:exFA5} and
\eqref{eq:exFA3} into \eqref{eq:exFA7}, we conclude that
\begin{equation}
\label{eq:GCC} \frac{1}{2}\left(aq^2-b\dot{q}^2
+3\ddot{q}^2\right) - \dot{q} \dddot{q} -\int F \dot{q} \, d
\theta
\end{equation}
is constant along any solution $q$ of \eqref{eq:ELdeord2}.
\end{example}

If $\alpha=1$, then one see from \eqref{eq:Ex:F} that $F=0$, and
\eqref{eq:GCC} gives the classical result in
\cite{CD:Djukic:1980}:
\begin{equation*}
\frac{1}{2}\left(aq^2
-b\dot{q}^2+3\ddot{q}^2\right)-\dot{q}\,\dddot{q}
\end{equation*}
is a conservation law.


\section*{Acknowledgments}

This work is part of the first author's PhD project, partially
supported by the \emph{Portuguese Institute for Development}
(IPAD). The authors are also grateful to the support of the
\emph{Portuguese Foundation for Science and Technology} (FCT)
through the \emph{Centre for Research in Optimization and Control}
(CEOC) of the University of Aveiro, cofinanced by the European
Community Fund FEDER/POCI 2010.




\begin{thebibliography}{99}

\bibitem{CD:Djukic:1980} D. S. Djukic, A. M. Strauss.
Noether's theory for nonconservative generalised mechanical systems,
J. Phys. A {\bf 13} (1980), no.~2, 431--435.

\bibitem{CD:El-Na:2005} R. A. El-Nabulsi.
A fractional action-like variational approach of
some classical, quantum and geometrical dynamics,
Int. J. Appl. Math. {\bf 17} (2005), no.~3, 299--317.

\bibitem{El-Nabulsi2005a} R. A. El-Nabulsi.
A fractional approach to nonconservative Lagrangian dynamical systems,
FIZIKA A {\bf 14} (2005), no.~4, 289--298.

\bibitem{CD:Gastao:2006} G. S. F. Frederico, D. F. M. Torres.
Constants of motion for fractional action-like variational problems,
Int. J. Appl. Math. {\bf 19} (2006), no.~1, 97--104.

\bibitem{GastaoIJTS} G. S. F. Frederico, D. F. M. Torres.
Nonconservative Noether's theorem in optimal control,
Int. J. Tomogr. Stat. {\bf 5} (2007), no.~W07, 109--114.

\bibitem{CD:GasDel:2007} G. S. F. Frederico, D. F. M. Torres.
Necessary optimality conditions for fractional action-like
problems with intrinsic and observer times, WSEAS Transactions on
Mathematics. In Special Issue: Nonclassical Lagrangian Dynamics
and Potential Maps (Guest Editor: Constantin Udriste), to appear.

\bibitem{alik} A. Gugushvili, O. Khutsishvili, V. Sesadze,
G. Dalakishvili, N. Mchedlishvili, T. Khutsishvili, V. Kekenadze, D. F. M. Torres.
Symmetries and conservation laws in optimal control systems,
Georgian Technical University, Tbilisi, 2003 (book in Georgian;
translated to English by T.~Stazhadze, 2007).

\bibitem{CD:Jumarie:2007} G. Jumarie.
Fractional Hamilton-Jacobi equation for the optimal control
of nonrandom fractional dynamics with fractional cost function,
J. Appl. Math. and Computing {\bf 23} (2007), no.~1-2, 215--228.

\bibitem{CD:JMS:Torres:2004} D. F. M. Torres.
Proper extensions of Noether's symmetry theorem for nonsmooth
extremals of the calculus of variations,
Commun. Pure Appl. Anal. {\bf 3} (2004), no.~3, 491--500.

\end{thebibliography}
\end{document}